\newtheorem{maintheorem}{Theorem}
\newtheorem{theorem}{Theorem}[section]
\newtheorem{proposition}[theorem]{Proposition}
\newtheorem{lemma}[theorem]{Lemma}
\begin{document}

\title{Induced subgraphs with many repeated degrees}

\author{
	Yair Caro 
	\thanks{Department of Mathematics, University of Haifa at Oranim, Tivon 3600600, Israel. Email: yacaro@kvgeva.org.il}
	\and
	Raphael Yuster
	\thanks{Department of Mathematics, University of Haifa, Haifa 3498838, Israel. Email: raphy@math.haifa.ac.il~.
	Research supported in part by the Israel Science Foundation (grant No. 1082/16).}
}

\date{}

\maketitle

\setcounter{page}{1}

\begin{abstract}
	Erd\H{o}s, Fajtlowicz and Staton asked for the least integer $f(k)$ such that every graph with more than $f(k)$ vertices has an induced regular subgraph with at least $k$ vertices. Here we consider the following relaxed notions.
	Let $g(k)$ be the least integer such that every graph with more than $g(k)$ vertices
	has an induced subgraph with at least $k$ repeated degrees and let $h(k)$ be the least integer such that every graph with more than $h(k)$ vertices has an induced subgraph with at least $k$ maximum degree vertices.
	We obtain polynomial lower bounds for $h(k)$ and $g(k)$ and nontrivial linear upper bounds when the host graph has bounded maximum degree.
\end{abstract}

\section{Introduction}

We consider undirected simple and finite graphs.
Erd\H{o}s, Fajtlowicz and Staton (c.f. \cite{CG-1998}, page 85) asked for the order of magnitude of the least integer $f(k)$ such that every graph with more than $f(k)$ vertices has an induced regular subgraph with at least $k$ vertices. Clearly $f(k) \le R(k)-1$ where $R(k)$ is the diagonal Ramsey number, but already small values of $k$ suggest that it is smaller. They conjectured that $f(k)$ is less than exponential in $k$ and this major open problem is still unresolved. The best lower bound for $f(k)$ is polynomial in $k$. Alon, Krivelevich and Sudakov \cite{AKS-2008} proved that $f(k) = \Omega(k^2/\sqrt{\log k})$ improving an earlier result of Bollobás (c.f. \cite{CG-1998}) who proved
$f(k) = \Omega(k^{2-\epsilon})$ for every $\epsilon > 0$.

In this paper we consider a version of the aforementioned question which relaxes the regularity requirement.
For a graph $G$, let $rep(G)$ be the maximum multiplicity of a mode (i.e. most common value) of the degree sequence of $G$. This parameter, also called the {\em repetition number} has been recently studied by several researchers,
see \cite{CSY-2014,CW-2009}.
Likewise, let $maxrep(G)$ be the number of vertices with maximum degree in $G$; this has recently been studied in
\cite{CY0-2010,GP-2018}. Let, therefore, $g(k)$ be the least integer such that every graph with more than $g(k)$ vertices has an induced subgraph with repetition number at least $k$ and let $h(k)$ be the least integer such that every graph with more than $h(k)$ vertices has an induced subgraph with at least $k$ maximum degree vertices.
We clearly have
$$
\Theta(k) \le g(k) \le h(k) \le f(k) \le R(k)-1
$$
and small values of $k$ show that all of these parameters are separated.

Our first main contribution in this paper is a polynomial lower bound for $g(k)$ and hence for $h(k)$.

\begin{maintheorem}\label{t:1}
There exists an absolute constant $c$ such that $g(k) \ge c k^{3/2}/(\log k)^{1/2}$.
\end{maintheorem}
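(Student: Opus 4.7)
The plan is to use the probabilistic method: take $G \sim G(n, 1/2)$ with $n = c k^{3/2}/\sqrt{\log k}$ for a sufficiently small absolute constant $c > 0$, and show by the first moment method that with positive probability every induced subgraph $G[S]$ has fewer than $k$ vertices sharing any common degree.

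The key estimate is a Littlewood--Offord-type anti-concentration bound: for a fixed subset $T \subseteq V(G)$ of size $k$ and any fixed superset $S \supseteq T$ with $|S| = m$, the probability that all vertices of $T$ share a common degree in $G[S]$ is at most $C m^{-(k-1)/2}$. To prove this, I would condition on the edges inside $T$ and inside $S \setminus T$; the remaining bipartite edges between $T$ and $S \setminus T$ make the bipartite degrees $|N(v) \cap (S \setminus T)|$ for $v \in T$ into $k$ independent $\mathrm{Bin}(m-k, 1/2)$ random variables. The common-degree constraint on $\deg_{G[S]}(v) = d_T(v) + |N(v) \cap (S \setminus T)|$ forces these binomials to hit prescribed values; applying the peak-probability bound $O(1/\sqrt{m-k})$ to $k-1$ of them and summing over the unknown common-degree value yields the claimed estimate.

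Next I would union bound over pairs $(T, S)$. Rewriting $\binom{n}{m}\binom{m}{k} = \binom{n}{k}\binom{n-k}{m-k}$, the sum becomes
\[
\sum_{m=k}^{n} \binom{n}{k}\binom{n-k}{m-k}\, C m^{-(k-1)/2}.
\]
The main contribution comes from $m$ of order $k$, where the bound $\binom{n}{k} \cdot O(k^{-(k-1)/2})$ falls below $1$ precisely when $n = O(k^{3/2})$. A sharpening of the Littlewood--Offord estimate via the local central limit theorem (which gives an additional $(\log m)^{-1/2}$ factor near the peak of the binomial) yields the refined threshold $n \leq c k^{3/2}/\sqrt{\log k}$.

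The main obstacle is controlling the contribution from intermediate and large values of $m$, where $\binom{n-k}{m-k}$ is exponential in $n$ and a naive union bound fails because the events for different $S$ are strongly correlated. The plan is to split into regimes: for small $m$ of order $k$ the Littlewood--Offord estimate and union bound close directly; for larger $m$, one applies Chernoff-type concentration to each degree bin in $G[S]$ to show that with very high probability no degree is shared by as many as $k$ vertices, uniformly over $S$. Balancing these regimes while absorbing the $\sqrt{\log k}$ factor from the local CLT is the technical heart of the argument.
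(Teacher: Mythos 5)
Your approach differs fundamentally from the paper's: you work in the uniform model $G(n,1/2)$ and rely on a Littlewood--Offord anti-concentration bound, whereas the paper (following Alon--Krivelevich--Sudakov) uses a non-uniform model $G(n,\overline{p})$ with $p_i=\frac14+\frac{i}{2n}$, and exploits the probability gradient: if $k$ vertices of an induced subgraph on $t$ vertices all have the same degree, then the degree sums of the first $k/3$ and last $k/3$ of them (in the vertex ordering) must coincide, while the gradient forces their expectations apart by $\Omega(tk^2/n)$; Chernoff then gives a failure probability of $\exp(-\Omega(tk^3/n^2))$, which is \emph{exponentially small in $t$} and therefore beats the $\binom{n}{t}\binom{t}{k}$ union bound.

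The gap in your argument is exactly the step you defer to the end, and I do not believe it can be closed as described. Your anti-concentration estimate gives $\Pr[\text{$T$ shares a degree in }G[S]]\le (C/(m-k))^{(k-1)/2}$, which decays only polynomially in $m$; already at $m=2k$ the union-bound term is about
$\binom{n}{k}\binom{n-k}{k}\,k^{-(k-1)/2}\approx \bigl(e^2n^2k^{-5/2}\bigr)^{k}$,
which forces $n=O(k^{5/4})$, not $k^{3/2}$, and for $m$ close to $n$ the factor $\binom{n-k}{m-k}\approx 2^{n}=e^{\Omega(k^{3/2}/\sqrt{\log k})}$ overwhelms the polynomial decay entirely. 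Your proposed repair for large $m$ --- Chernoff on each degree bin of a fixed $G[S]$ --- yields a failure probability of roughly $(e\sqrt{m}/k)^{k}=e^{-O(k\log k)}$ per set $S$ (the expected bin occupancy is $\Theta(\sqrt{m})$), which is nowhere near the $2^{-n}$ needed to survive the union over all $S$. This is not merely a technical nuisance: the uniform model is intrinsically on a knife's edge here, since by Krivelevich--Sudakov--Wormald $G(n,1/2)$ with high probability contains an induced \emph{regular} subgraph on $\Theta(n^{2/3})$ vertices, i.e.\ an induced subgraph with repetition $\Theta(n^{2/3})$, which with $n=ck^{3/2}$ is already of order $k$. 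The non-uniform model is what converts the equal-degree event into a large-deviation event with the exponential-in-$t$ decay that the union bound requires; without some substitute for that mechanism (and I also see no justification for the extra $(\log m)^{-1/2}$ you claim from the local CLT, since the binomial peak probability is $\Theta(N^{-1/2})$ with no logarithmic gain), the proof does not go through.
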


Determining these functions even with the further restriction that the graph in question has bounded maximum degree also seems challenging. Let $g(k,d)$ denote the least integer such that every graph with maximum degree at most $d$
and more than $g(k,d)$ vertices has an induced subgraph $H$ with $rep(H) \ge k$. Similarly define $h(k,d)$ and $f(k,d)$.
Again, we have here that for every nonnegative integer $d$, $g(k,d) \le h(k,d) \le f(k,d) \le (k-1)(d+1)$ as in every
graph with $(k-1)(d+1)+1$ vertices and with maximum degree $d$ there is an independent set of size $k$.
Once again, it is not difficult to construct some small examples of distinct pairs $(k,d)$ 
showing that these $2$-valued functions are separated.

For very small fixed $d$, the values of $f(k,d)$ and $g(k,d)$ are easy to obtain.
For example, it clearly holds that $g(k,0)=h(k,0)=f(k,0)=k-1$.
It is also easy to show that $h(k,1)=g(k,1)=f(k,1) = \lfloor 1.5(k-1) \rfloor$.
For $d=2$ things are only slightly more involved but all parameters can be precisely determined as the host graphs in this case 
are just disjoint unions of cycles, paths, and isolated vertices. For example, it is not difficult to prove that
$h(k,2)=2(k-1)$ for all odd $k$ and $h(k,2)=2 k - 3$ for all even $k$ as was shown in \cite{CLZ-preprint}.
However, already for $d=3$, even the asymptotic behaviors of $h(k,3)$ as well as $g(k,3)$ and $f(k,3)$ seem elusive.
It is not difficult to show that for every fixed $d$, each of $h(k,d)/k$, $g(k,d)/k$, $f(k,d)/k$ has a limit.
The following theorem provides nontrivial upper and lower bounds for the limits of the first two, while Proposition \ref{p:1} provides an upper bound for the limit of the third.
\begin{maintheorem}\label{t:2}
There exists an absolute constant $c > 0$ such that 
for every $d \ge 2$,
$$
	c \left(\frac{d}{\log d}\right)^{1/3} \le \lim_{k \rightarrow \infty} \frac{g(k,d)}{k} \le  \lim_{k \rightarrow \infty} \frac{h(k,d)}{k} \le \frac{d}{2}+1\;.
$$
\end{maintheorem}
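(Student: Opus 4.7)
The theorem has two independent parts: an upper bound on $\lim_k h(k,d)/k$ and a lower bound on $\lim_k g(k,d)/k$, which I would address separately. The middle inequality $g(k,d)\le h(k,d)$ is immediate from the definitions, since an induced subgraph with $\ge k$ vertices of maximum degree automatically has repetition number $\ge k$.

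For the upper bound $\lim_k h(k,d)/k\le d/2+1$, the plan is to prove the stronger structural claim that every graph $G$ with $n$ vertices and maximum degree at most $d$ admits an induced subgraph with at least $\lceil 2n/(d+2)\rceil$ vertices of maximum degree, which yields $h(k,d)\le\lfloor((d+2)/2)(k-1)\rfloor$. I would set up the peeling process $G=G_0\supset G_1\supset\cdots$ in which $L_i\subseteq V(G_i)$ is the set of vertices of maximum degree $\Delta_i:=\Delta(G_i)$ of $G_i$ and $G_{i+1}:=G_i\setminus L_i$; the process ends in at most $d+1$ layers since $\Delta_i$ is strictly decreasing, and each $G_i$ is an induced subgraph realizing $|L_i|$ max-degree vertices. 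The key structural input is that for any $u\in L_j$ and any $i<j$, the fact that $u\notin L_i$ forces $|N(u)\cap(L_i\cup\cdots\cup L_{j-1})|\le\Delta_i-\Delta_j-1$; in particular, whenever $\Delta_{i-1}=\Delta_i+1$ there are no edges between $L_{i-1}$ and $L_i$, and summing this inequality over vertices and layers produces a refined average-degree estimate of the form $2|E(G)|\le(\Delta_0-1)n+|L_0|$. The hard part will be turning these layer inequalities into the clean factor $(d+2)/2$ (instead of the trivial $d+1$ one gets from $\sum_i|L_i|\le(d+1)(k-1)$); this seems to require, beyond the bare layers $L_i$, auxiliary induced subgraphs $G[L_i\cup I]$ where $I$ is an independent set drawn from later layers, exploiting the extra ``free'' isolated vertices produced by the no-consecutive-edge constraint to manufacture a second induced subgraph with $\ge k$ max-degree vertices without exceeding the total vertex budget.

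For the lower bound $\lim_k g(k,d)/k\ge c(d/\log d)^{1/3}$, I would use a probabilistic construction: take $G$ to be a random (or pseudo-random) $d$-regular graph on $n=c(d/\log d)^{1/3}k$ vertices and show that with positive probability, every induced subgraph $H\subseteq G$ satisfies $rep(H)<k$. For a fixed $H$ of size $m$, each vertex of $H$ has its $H$-degree approximately $\mathrm{Bin}(d,m/n)$-distributed, whose mode probability is of order $1/\sqrt{d(m/n)(1-m/n)}$; concentration via the switching method (or Kim--Vu for the random regular model) bounds the probability that any fixed degree value is attained by $k$ or more vertices of $H$ by $\exp(-\Omega(k^2\sqrt{d}/m))$. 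A union bound over the $2^n$ choices of $H$ and the $d+1$ possible degree values succeeds provided $k^2\sqrt{d}/m\gtrsim n+\log d$ uniformly in $m$; balancing the entropy $n\log 2$ against the concentration exponent and the $\log d$ loss from the union over degrees produces the cube-root exponent with a $\log d$ denominator. The main obstacle will be justifying the concentration uniformly in $m$, particularly when $m$ is close to $n$ where the binomial approximation degrades---in that range one typically has to invoke expansion (or spectral-gap) properties of the pseudo-random host $G$ to rule out adversarial small modifications that might concentrate the induced degree distribution beyond the target.
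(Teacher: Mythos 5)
Your handling of the middle inequality is fine, but both substantive halves of the proposal have genuine gaps.

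The lower bound is where the proposal breaks irreparably. A $d$-regular graph $G$ on $n$ vertices is itself an induced subgraph of itself with $rep(G)=n$, since all $n$ degrees coincide; so as soon as $n=c(d/\log d)^{1/3}k\ge k$ --- which is exactly the regime in which the claimed bound is nontrivial, namely $d$ large --- your host graph already contains an induced subgraph with repetition at least $k$. No switching, concentration, or spectral-gap argument can repair this: the failure occurs deterministically at $m=n$, not as a tail event, and your own binomial heuristic signals it (the mode probability $1/\sqrt{d(m/n)(1-m/n)}$ blows up as $m\to n$). Any witness for the lower bound must have a genuinely spread-out degree sequence. The paper's route is much softer: by Theorem \ref{t:1} (whose construction uses the non-uniform model $G(n,\overline{p})$ with increasing edge probabilities precisely to avoid near-regularity) there is a graph $G_d$ on at most $d$ vertices in which every induced subgraph has repetition less than $Cd^{2/3}(\log d)^{1/3}$; taking $k/(Cd^{2/3}(\log d)^{1/3})$ disjoint copies gives a graph with maximum degree less than $d$ (each component has fewer than $d$ vertices) and, since repetition is subadditive over components, no induced subgraph with repetition $k$. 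This yields $g(k,d)\ge kd^{1/3}/(C(\log d)^{1/3})$ with no further probabilistic work (Proposition \ref{p:0}).

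The upper bound is not wrong in spirit, but it is not a proof: you yourself flag the passage from the trivial factor $d+1$ to $(d+2)/2$ as the ``hard part'' and only gesture at auxiliary subgraphs $G[L_i\cup I]$ that would ``manufacture'' a second witness. That is precisely the step requiring an argument, and your layer inequalities (correct as far as they go) do not supply it. The paper's Lemma \ref{l:upper-1} proves $h(k,d)\le(k-1)(d/2+1)$ by induction on $d$: assuming fewer than $k$ vertices of degree $d$ (else done), it bounds $|E(G)|\le n(d-1)/2+(k-1)/2$, peels vertices of the current top degree one at a time down to a graph $H_1$ of maximum degree at most $1$, records the number removed at each degree level as $x_i(k-1)$, invokes the induction hypothesis whenever an intermediate graph $H_{i-1}$ is large enough, and otherwise accumulates the resulting inequalities $\sum_{j\ge i}x_j\ge n/(k-1)-(i+1)/2$ into a lower bound on $\sum_i (i-1)x_i$ that forces $H_1$ to contain an independent set (of size at least its vertex count minus its edge count) of size at least $k$; an independent set of size $k$ is itself an induced subgraph with $k$ maximum-degree vertices. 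To salvage your layer-by-layer scheme you would need to carry out the analogue of this bookkeeping explicitly; as written, the factor $(d+2)/2$ is asserted rather than derived.
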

Regarding the first nontrivial case $d=3$ we obtain the following more specific bounds.
\begin{maintheorem}\label{t:3}
	$$
	\frac{53}{24} \le \lim_{k \rightarrow \infty} \frac{h(k,3)}{k} \le \frac{5}{2}\;,
	\qquad
	\frac{13}{6} \le \lim_{k \rightarrow \infty} \frac{g(k,3)}{k} \le \frac{12}{5}\;.
	$$
	
\end{maintheorem}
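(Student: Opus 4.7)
The four bounds split into two upper bounds, handled by quite different techniques, and two lower bounds, handled by a common gadget construction.

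\textbf{Upper bounds.} The bound $\lim_{k} h(k,3)/k \le 5/2$ is immediate from Theorem \ref{t:2} with $d=3$, since $d/2+1 = 5/2$. The sharper bound $\lim_{k} g(k,3)/k \le 12/5$ requires a refined argument specific to $d=3$. Given $G$ with $\Delta(G) \le 3$ and $n_i$ vertices of degree $i\in\{0,1,2,3\}$, if some $n_i \ge k$ we are done by taking $H=G$; otherwise the trivial pigeonhole only yields $n \le 4(k-1)$. To sharpen the ratio from $4$ to $12/5$, I would examine the induced subgraphs $G[V\setminus V_i]$ obtained by deleting an entire degree class. Deleting $V_i$ changes the degrees of the neighbors of $V_i$ in a way controlled by the handshake lemma and the edge counts between classes, redistributing the class sizes. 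A careful case analysis on the adversarial degree sequence, optimizing the choice of $i$ to delete (and possibly iterating once), should guarantee that either $G$ itself or some $G[V\setminus V_i]$ has a degree class of size at least $5n/12 \ge k$.

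\textbf{Lower bounds via gadgets.} For both lower bounds I would exhibit a fixed graph $G_0$ with $\Delta(G_0)\le 3$ and $n_0$ vertices, and then take many disjoint copies. Let $m_h := \max_{H\subseteq G_0} maxrep(H)$ and $m_g := \max_{H\subseteq G_0} rep(H)$. For the disjoint union $G = tG_0$, every induced subgraph $H$ decomposes as $\bigsqcup_{i=1}^{t} H_i$ with each $H_i$ an induced subgraph of the $i$th copy of $G_0$. Since max-degree counts and degree-class counts add across disjoint unions (contributions from a component whose own maximum degree is strictly smaller than $\Delta(H)$ simply vanish for $maxrep$), a direct additivity argument gives $maxrep(H) \le t\, m_h$ and $rep(H) \le t\, m_g$. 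Hence $h(tm_h+1,3) \ge tn_0$ and $g(tm_g+1,3) \ge tn_0$, and letting $t\to\infty$ yields $\lim_k h(k,3)/k \ge n_0/m_h$ and $\lim_k g(k,3)/k \ge n_0/m_g$. The goal is thus to exhibit gadgets with $n_0/m_h \ge 53/24$ and $n_0/m_g \ge 13/6$.

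\textbf{Main obstacle.} Locating optimal gadgets is the crux of the proof. Any $3$-regular $G_0$ is useless, since $maxrep(G_0)=rep(G_0)=n_0$ already for $H=G_0$; the gadget must be irregular and, more subtly, no induced subgraph of it may become close to regular. I would search among small non-regular near-cubic graphs --- induced subgraphs of the Petersen, Heawood, and generalized Petersen graphs, and graphs obtained from small cubic cores by attaching pendants or subdividing edges --- verifying the extremal bound by a combination of symmetry reduction (using the automorphism group of the candidate) and, if needed, exhaustive computer search over induced subgraphs. The specific denominators $24$ and $6$, together with the modest ratios $53/24$ and $13/6$, suggest that compact explicit gadgets will suffice and that the gadgets for $h$ and $g$ are likely distinct.
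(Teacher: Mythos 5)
Two of your four bounds are fine in outline: $\lim h(k,3)/k\le 5/2$ does follow from Theorem~\ref{t:2} at $d=3$ (the paper gets it from the general bound $h(k,d)\le(k-1)(d/2+1)$ of Lemma~\ref{l:upper-1}), and your additivity argument for disjoint unions of gadgets is exactly the mechanism behind the paper's lower bounds. But the proposal has two genuine gaps. The first is that the lower-bound gadgets, which you yourself call the crux, are never exhibited; a search plan is not a proof. The paper's gadgets are small and explicit: for $g(k,3)\ge(13/6)(k-1)$ take equal numbers of copies of $K_1$, $K_2$, $P_4$ and $\overline{C_6}$ (for each degree $p\in\{0,1,2,3\}$ the maximum attainable numbers of degree-$p$ vertices in induced subgraphs of these four graphs sum to $6$, against $13$ vertices total); for $h(k,3)\ge(53/24)(k-1)$ one needs a \emph{mixture} of $K_1$ and three graphs on $6,8,8$ vertices in proportions $7:1:1:4$, found by solving a linear program over all connected graphs of order at most $10$ with one constraint per maximum-degree value. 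Your single-gadget formulation only survives if $G_0$ is allowed to be disconnected (so that it encodes such a mixture), and your proposed search space (Petersen-type graphs) points away from where the extremal examples actually live.

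The second gap is the upper bound $g(k,3)\le(12/5)(k-1)$. Your proposed mechanism --- that if every degree class of $G$ has size at most $k-1$ then one of the five graphs $G$ or $G[V\setminus V_i]$ has a degree class of size at least $5n/12$ --- is asserted, not proved, and it is not how the bound is obtained; I see no reason why deleting an entire degree class should concentrate the remaining degrees in this way. The paper instead proves the general bound $g(k,d)\le(k-1)(2d+6)/5$ by induction on $d$: the hypothesis that each degree occurs at most $k-1$ times yields an improved upper bound on $|E(G)|$; one then peels off single vertices of current maximum degree until the maximum degree drops to $1$, and either some intermediate graph is already large enough for the induction hypothesis, or the terminal graph has so few edges relative to its order that it contains an independent set of size $k$. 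The witness subgraph is an independent set obtained after deleting individually chosen vertices, not a union of whole degree classes, so your five candidates cannot in general replace this argument.
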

In the next section we prove the general lower bound, namely Theorem \ref{t:1}.
Section 3 considers bounded degree graphs where we prove Theorems \ref{t:2} and \ref{t:3}.
The final section contains some concluding remarks and open problems.

\section{A polynomial lower bound for repetition of induced subgraphs}

\begin{proof}[Proof of Theorem \ref{t:1}.]
Our construction is probabilistic and follows the non-uniform random graph model as in \cite{AKS-2008}.
Let $\overline{p}=(p_1,\ldots,p_n)$ where $p_i=(1-\epsilon)/\sqrt{2}+i(4+\sqrt{2})\epsilon/({7n})$ and let $G(n,\overline{p})$ be the probability space of graphs on vertex set $[n]$ where the pair $(i,j)$ is an edge 
with probability $p_i p_j$ independently of all other pairs. In this proof we will use $\epsilon=1-\sqrt{2}/4$ so that
$p_i = 1/4+i/(2n)$.

Let $C > 0$ be an absolute constant to be chosen later and let $k=3 C n^{2/3}(\ln n)^{1/3}$.
We will assume that $k$ is an integer multiple of $3$ as this does not affect the asymptotic claim and also assume that $n$ is sufficiently large to satisfy the claimed inequalities.
For every $k \le t \le n$ we will prove that the probability that $G \sim G(n,\overline{p})$ has an induced subgraph $H$ with $t$ vertices having $rep(H) \ge k$ is less than $1/n$.
Hence, by the union bound it will follow that with positive probability, a graph $G \sim G(n,\overline{p})$ has no subgraph with repetition number at least $k$ and therefore
$g(k) \ge n = c k^{3/2}/(\log k)^{1/2}$ for a suitable absolute constant $c$, hence Theorem \ref{t:1} follows.

Fix some $t$ with $k \le t \le n$ and fix some $T \subseteq [n]$ with $t=|T|$. Let $H=G[T]$ where $G \sim G(n,\overline{p})$. We will prove that the probability that $rep(H) \ge k$
is less than $(n \binom{n}{t})^{-1}$ and as there are only $\binom{n}{t}$ choices for $T$, this will prove that the probability of $G$ having an induced subgraph $H$ with $t$ vertices having $rep(H) \ge k$ is less than $1/n$,
as claimed.

Fix some $R \subseteq T$ with $|R|=k$. We will prove that the probability of all the vertices of $R$ having the same degree
in $H$ is less than $(\binom{t}{k})^{-1}(n \binom{n}{t})^{-1}$ hence it will imply the probability that $rep(H) \ge k$
is less than $(n \binom{n}{t})^{-1}$, as claimed.

Recall that the vertices of $R$ are a subset of the total order $[n]$ hence let $R=\{v_1,\ldots,v_k\}$ where $v_i < v_{i+1}$
for $i=1,\ldots,k-1$. Let $A$ denote the first $k/3$ vertices of $R$ and let $B$ denote the last $k/3$ vertices of $R$. Our goal is to upper bound the probability that the sum of the degrees in $H$ of the vertices in $A$
equals the sum of the degrees in $H$ of the vertices in $B$. This probability clearly upper bounds the probability that
all the vertices of $R$ have the same degree in $H$.

For a pair of vertices $(i,j)$, let $X_{i,j}$ denote the
indicator random variable which equals $1$ if $(i,j)$ is an edge.
Let $X_A$ denote the sum of the degrees of $A$ in $H$.
Let $Y_A$ denote the sum of the indicator random variables corresponding to pairs $(i,j)$ such that $i \in A$ and
$j \in T \setminus (A \cup B)$ and let $Z_A$ denote the sum of the indicator random variables corresponding to pairs $(i,j)$ such that both $i,j$ are in $A$. Analogously define $X_B$, $Y_B$, $Z_B$.
Notice that $X_B-X_A=Y_B+2 Z_B-Y_A-2 Z_A$ (indicator random variables corresponding to pairs with one endpoint in $A$ and another in $B$ are canceled out in the difference $X_B-X_A$).
Also notice that all the $2\binom{k/3}{2}+2(k/3)(t-2 k/3)$ indicator variables involved in forming $Y_A,Y_B,Z_A,Z_B$ are independent.

We next estimate $E[X_B-X_A]$.
First observe that $E[Z_B - Z_A] \ge 0$ since a pair of vertices in $B$ have a higher chance to be an edge than a pair of vertices in $A$. Therefore,
$$
E[X_B-X_A]=E[Y_B-Y_A] + 2 E[Z_B-Z_A] \ge E[Y_B-Y_A]\;.
$$
For every vertex $j \in T \setminus (A \cup B)$ and for every $i \in A$ and $i' \in B$ the
probability of
the pair $i,j$ to be an edge is less than the probability of the pair $(i',j)$ to be an edge by
at least $k/(24 n)$ since
$$
\left(\frac{1}{4}+\frac{i'}{2 n}\right) \left(\frac{1}{4}+\frac{j}{2 n}\right) - 
\left(\frac{1}{4}+\frac{i}{2 n}\right) \left(\frac{1}{4}+\frac{j}{2 n}\right) \ge
\left(\frac{i'-i}{8 n}\right) > \frac{k}{24 n}\;.
$$
It therefore follows that $E[Y_B-Y_A] \ge k(t-2 k)\frac{k}{24 n}$.
Consequently, since $t \ge 3 k$ we obtain
$$
E[X_B-X_A] \ge E[Y_B-Y_A] \ge k(t-2 k)\frac{k}{24 n} \ge \frac{tk^2}{72 n}\;.
$$

Let us next consider the random variable $S=\frac{1}{2}E[X_B-X_A]-\frac{1}{2}(X_B-X_A)$.
Then $E[S] = 0$ and $S$ is the sum of $q=2\binom{k/3}{2}+2(k/3)(t-2 k/3)$ independent random variables
where each of these random variables takes only two values and those two values are at most $1$ apart.
Then by a large deviation result of Chernoff (see \cite{AS-2004}, Theorem A.1.18),
$$
\Pr[S > a] < e^{-\frac{2a^2}{q}}\;.
$$
We therefore obtain:
\begin{align*}
	\Pr[X_B = X_A] & = \Pr\left[S = \frac{E[X_B-X_A]}{2}\right] \\
	& \le \Pr\left[S > \frac{E[X_B-X_A]}{3}\right]\\
	& \le \exp\left(-\frac{2\left(\frac{E[X_B-X_A]}{3}\right)^2}{q}\right)\\
	& \le \exp\left(-\frac{2 t^2 k^4}{9 \cdot 72^2 n^2 q}\right)\\
	& \le \exp\left(-\frac{t k^3}{9 \cdot 72^2 n^2}\right)\;.
\end{align*}
Recalling that $k=3 C n^{2/3}(\ln n)^{1/3}$ we obtain from the last inequality that for a sufficiently large absolute constant $C$,
\begin{align*}
\Pr[X_B = X_A] & \le \exp\left(-\frac{tk^3}{9 \cdot 72^2 n^2}\right)\\
& \le \exp\left(-2 t \ln n\right)\\
& = \frac{1}{n^{2 t}}\\
& < \frac{1}{n\binom{t}{k}\binom{n}{t}}
\end{align*}
as required.
\end{proof}

\section{Repetition in induced subgraphs of bounded degree graphs}

We first observe that for every fixed $d$, the sequences $g(k,d)/k$ and $h(k,d)/k$ have a limit.
First observe that as mentioned in the introduction, both are bounded from above by $d+1$.
We next show that $g(2 k,d) \ge 2g(k,d)$ and $h(2k,d) \ge 2h(k,d)$ implying the claimed limit in
both cases. Consider a graph $G$ with maximum degree at most $d$ and with $g(k,d)$ vertices
for which every induced subgraph $H$ has $rep(H) < k$.
Take two vertex-disjoint copies of $G$, thereby obtaining a graph with
maximum degree at most $d$ and with $2g(k,d)$ vertices for which every induced subgraph $H$ has $rep(H) < 2k$. Thus, by its definition, $g(2k,d) \ge 2g(k,d)$. An identical argument holds for $h(k,d)$.

The proofs of Theorems \ref{t:2} and \ref{t:3} follow directly from the proofs of the following Lemmas
and proposition.

\begin{proposition}\label{p:0}
	There exists an absolute constant $c > 0$ such that for all $d \ge 2$,
	$$
	c \left(\frac{d}{\log d}\right)^{1/3} \le \lim_{k \rightarrow \infty} \frac{g(k,d)}{k}\;.
	$$
\end{proposition}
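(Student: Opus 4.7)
The plan is to adapt the probabilistic construction in the proof of Theorem~\ref{t:1} to the bounded-degree setting and then invoke the inequality $g(2k,d)\ge 2g(k,d)$ established in the paragraph preceding the proposition. Since that superadditivity transfers a lower bound on a single $g(k_0,d)$ to a lower bound on $\lim_k g(k,d)/k$, it suffices to construct, for each sufficiently large $d$, a good graph on $n_0$ vertices with $n_0/k_0\gtrsim (d/\log d)^{1/3}$ for one carefully chosen $k_0=k_0(d)$.

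Concretely, I would fix $k_0$ polynomial in $d$ (say $k_0=d^{10}$), set $n_0=\lfloor\beta k_0(d/\log d)^{1/3}\rfloor$ for a small absolute constant $\beta$, and sample $G\sim G(n_0,\overline{p})$ with $p_i=\gamma\sqrt{d/n_0}\bigl(1/2+i/(2n_0)\bigr)$ for a small constant $\gamma$. The $\sqrt{d/n_0}$ rescaling keeps every expected degree at $O(\gamma^2 d)$; choosing $\gamma$ small enough that this is at most $d/2$, a multiplicative Chernoff bound combined with a union bound over the $n_0=\operatorname{poly}(d)$ vertices shows $\Delta(G)\le d$ with probability $1-o(1)$ as $d\to\infty$.

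For the repetition analysis I keep the partition $R=A\cup C\cup B$ of Theorem~\ref{t:1} into first/middle/last thirds (so $|A|=|B|=k_0/3$) and propagate the $\sqrt{d/n_0}$ scaling through its mean/variance computations. For every admissible $t,T,R$ with $k_0\le t\le n_0$ this yields
\[
E[X_B-X_A]=\Omega\!\Bigl(\tfrac{t\,k_0^2 d}{n_0^2}\Bigr),\qquad \operatorname{Var}[X_B-X_A]=O\!\Bigl(\tfrac{t\,k_0 d}{n_0}\Bigr).
\]
Because the indicators are now sparse Bernoullis rather than near-$\tfrac12$ ones, I replace the variance-free Chernoff bound of Theorem~\ref{t:1} by a variance-aware Bernstein inequality, obtaining $\Pr[X_A=X_B]\le\exp\!\bigl(-c_3\,tk_0^3 d/n_0^3\bigr)=\exp\!\bigl(-(c_3/\beta^3)\,t\log d\bigr)$ from the choice of $n_0$.

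The decisive step is the union bound. Using the identity $\binom{n}{t}\binom{t}{k}=\binom{n}{k}\binom{n-k}{t-k}$ to telescope the sum over $t$,
\[
\sum_{t=k_0}^{n_0}\binom{n_0}{t}\binom{t}{k_0}\exp\!\bigl(-(c_3/\beta^3)\,t\log d\bigr) = \binom{n_0}{k_0}\,d^{-c_3 k_0/\beta^3}\bigl(1+d^{-c_3/\beta^3}\bigr)^{n_0-k_0}.
\]
Since $\log\binom{n_0}{k_0}\le\bigl(\tfrac13+o(1)\bigr)k_0\log d$, and the last factor is at most $\exp\!\bigl(n_0\,d^{-c_3/\beta^3}\bigr)=e^{o(k_0\log d)}$ whenever $c_3/\beta^3>\tfrac13$, choosing $\beta<(3c_3)^{1/3}$ forces this sum strictly below $1$ for all sufficiently large $d$. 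With positive probability $G$ then has $\Delta(G)\le d$ and $rep(H)<k_0$ for every induced $H$, giving $g(k_0,d)\ge n_0=\beta k_0(d/\log d)^{1/3}$. Applying $g(2^jk_0,d)\ge 2^jg(k_0,d)$ and the existence of the limit yields $\lim_k g(k,d)/k\ge\beta(d/\log d)^{1/3}$ for all large $d$; the finitely many remaining $d$ are absorbed into the absolute constant $c$ by taking it small and using the trivial $g(k,d)\ge k-1$. The main obstacle is this union-bound bookkeeping: a crude $\binom{n_0}{t}\binom{t}{k_0}\le 2^{2n_0}$ estimate swallows a factor of $2^{n_0}$ and yields only $\lim\ge c\,d^{1/4}$, so exploiting the identity above together with the sharp $(1+x)^N\le e^{Nx}$ is essential for reaching the $(d/\log d)^{1/3}$ rate.
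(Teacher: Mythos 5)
Your argument is essentially sound, but it takes a much longer road than the paper does, so it is worth seeing what the short road is. The paper's proof is a two-line reduction to Theorem~\ref{t:1}: by that theorem there is a graph $G_d$ on (at most) $d$ vertices in which every induced subgraph has repetition less than $C d^{2/3}(\log d)^{1/3}$; such a graph \emph{automatically} has maximum degree less than $d$, so the degree constraint costs nothing. Taking $k/(Cd^{2/3}(\log d)^{1/3})$ disjoint copies and using the subadditivity of $rep$ over disjoint unions (an induced subgraph of a disjoint union splits into induced subgraphs of the components, and the count of each fixed degree value adds) gives a graph on $k d^{1/3}/(C(\log d)^{1/3})$ vertices with all induced repetitions below $k$, which is exactly the claimed bound. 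Your route instead reruns the whole probabilistic machinery in the sparse regime: rescaling $\overline{p}$ by $\sqrt{d/n_0}$, controlling $\Delta(G)$ by a multiplicative Chernoff bound, replacing the Hoeffding-type estimate by Bernstein (correctly --- the variance-free bound really is too weak here, as your $d^{1/4}$ remark shows), and doing the $\binom{n_0}{t}\binom{t}{k_0}=\binom{n_0}{k_0}\binom{n_0-k_0}{t-k_0}$ bookkeeping to close the union bound. I checked your exponent arithmetic ($E[X_B-X_A]=\Omega(tk_0^2d/n_0^2)$, variance $O(tk_0d/n_0)$, hence tail $\exp(-\Omega(tk_0^3d/n_0^3))=d^{-\Omega(t)}$, beating $\binom{n_0}{k_0}\approx d^{k_0/3}$) and it is consistent, and the final passage to the limit via $g(2^jk_0,d)\ge 2^jg(k_0,d)$ is legitimate since the limit is known to exist. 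So your proof should go through with careful constants, and it has the mildly interesting feature of producing an essentially connected witness rather than a disjoint union of $d$-vertex blocks; but it buys no improvement in the bound, and the observation you are missing --- that a component on $d$ vertices satisfies the degree-$d$ constraint for free, so Theorem~\ref{t:1} can be invoked as a black box --- collapses all of this work.
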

\begin{proof}
Let $d \ge 2$.
By Theorem \ref{t:1}, there exists an absolute constant $C > 0$ such that there are graphs with at most $d$ vertices and for which every induced subgraph has repetition less than $C d^{2/3} (\log d)^{1/3}$.
Fixing $d$, let $G_d$ be such a graph. Now take $k/(C d^{2/3} (\log d)^{1/3})$ pairwise vertex-disjoint
copies of $G_d$. The resulting graph, which clearly has maximum degree less than $d$,
has $k d^{1/3}/(C(\log d)^{1/3})$ vertices and every induced subgraph
has repetition less than $k$. It follows that $g(k,d)/k \ge d^{1/3}/(C (\log d)^{1/3})$
and the proposition follows.
\end{proof}

\begin{lemma}\label{l:hk3lower}
	For infinitely many $k$ it holds that $h(k,3) \ge (53/24)(k-1)$.
\end{lemma}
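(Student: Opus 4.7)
The plan is to apply the standard disjoint-copies amplification. I would first exhibit a gadget graph $G_0$ on $53$ vertices of maximum degree at most $3$ such that $maxrep(H) \le 24$ for every induced subgraph $H$ of $G_0$. Given such a $G_0$, let $G_m$ denote the disjoint union of $m$ copies of $G_0$, a graph on $53m$ vertices of maximum degree $3$. Every induced subgraph $H$ of $G_m$ decomposes as $H = H_1 \sqcup \cdots \sqcup H_m$, where $H_i$ is the part of $H$ in the $i$-th copy, so $\Delta(H) = \max_i \Delta(H_i)$, and the degree-$\Delta(H)$ vertices of $H$ lie in precisely those copies attaining this maximum, each contributing $maxrep(H_i) \le 24$. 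Hence $maxrep(H) \le 24m$ for every induced $H$. Choosing $k = 24m + 1$, the graph $G_m$ has $(53/24)(k-1)$ vertices and no induced subgraph with $k$ or more max-degree vertices, yielding $h(k,3) \ge (53/24)(k-1)$ for every $k$ of the form $24m + 1$.

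The substantive work is then to build $G_0$ and verify the property, which I would decouple by the value $\Delta(H) \in \{0,1,2,3\}$. For $\Delta(H) = 3$ the requirement is that $G_0$ has at most $24$ vertices of degree $3$; for $\Delta(H) = 0$, where $H$ is an independent set, it reduces to $\alpha(G_0) \le 24$; for $\Delta(H) = 1$, where $H$ is a disjoint union of edges and isolated vertices, it says the induced matching number of $G_0$ is at most $12$; and the binding constraint for $\Delta(H) = 2$ is that every $S \subseteq V(G_0)$ with $\Delta(G_0[S]) = 2$ contains at most $24$ vertices that have exactly two neighbors in $S$. This last condition precludes long induced paths and large induced cycles in $G_0$, so a natural explicit construction takes a $3$-regular \emph{core} on $24$ vertices and attaches a low-diameter $29$-vertex \emph{trim} to the core so that every core vertex has at least one neighbor in the trim.

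The main obstacle is the $\Delta(H) = 2$ case. My strategy is to partition the degree-$2$ vertices of $G_0[S]$ into type~A, those originally of degree $3$ in $G_0$ that lost exactly one neighbor to $V(G_0) \setminus S$, and type~B, those originally of degree $2$ in $G_0$ that retained both neighbors in $S$. Type~A is at most the number of degree-$3$ vertices of $G_0$, and type~B is bounded by the shallow, low-diameter structure of the trim, since no long induced path or cycle on originally degree-$2$ vertices can survive. A finite, mechanical inspection of the specific $G_0$ then confirms that the two contributions together are $\le 24$ for every such $S$, and the amplification above immediately yields the lemma for all $k \equiv 1 \pmod{24}$.
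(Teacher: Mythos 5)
Your amplification step is fine and matches the paper's framework: the paper's construction is itself a disjoint union of a $53$-vertex ``gadget'' (seven isolated vertices plus three small connected graphs $H_1,H_2,H_3$ on $6,8,8$ vertices), repeated $(k-1)/24$ times, and your observation that $maxrep$ adds over the components attaining the maximum degree is exactly the verification the paper carries out, split by $\Delta(H)\in\{0,1,2,3\}$. The four constraints you isolate (at most $24$ degree-$3$ vertices, $\alpha(G_0)\le 24$, induced matching on at most $24$ vertices, and the degree-$2$ condition) are also the right ones.

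The genuine gap is that you never exhibit $G_0$. The entire mathematical content of the lemma is the existence of a $53$-vertex, maximum-degree-$3$ graph satisfying all four constraints \emph{simultaneously}, and these constraints pull in opposite directions: driving $\alpha(G_0)$ down to $24$ on $53$ vertices forces many edges, which tends to create more than $24$ degree-$3$ vertices, and vice versa. Your proposed architecture --- a $3$-regular $24$-vertex core with a $29$-vertex ``trim'' attached so that every core vertex has a neighbor in the trim --- is not verified and is in fact inconsistent as stated: if every core vertex has a neighbor in the trim, the core vertices have degree at least $4$ in $G_0$, violating the maximum-degree bound (presumably you meant the core is $3$-regular only after attachment, but then the constraints must be re-checked from scratch). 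Deferring the construction to ``a finite, mechanical inspection of the specific $G_0$'' when no specific $G_0$ has been given is not a proof. The paper's gadget looks quite different from what you describe --- it is disconnected, contains seven isolated vertices, and was found by a linear program optimizing over all connected components of order at most $10$ and maximum degree at most $3$ --- which underscores that the gadget is not something one can wave at; it must be written down and checked.
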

\begin{proof}
	Consider the following set of graphs $\{K_1, H_1,H_2,H_3\}$ where $K_1$ is the isolated vertex and
	$H_1,H_2,H_3$ are the graphs shown in Figure \ref{f:hk3}. Observe that each of them has maximum degree
	at most $3$. Now, suppose $k-1$ is a multiple of $24$. Construct a graph $G_k$ as follows.
	Take $(7/24)(k-1)$ disjoint copies of $K_1$, $(1/24)(k-1)$ disjoint copies of $H_1$,
	$(1/24)(k-1)$ disjoint copies of $H_2$ and $(1/6)(k-1)$ disjoint copies of $H_3$.
	The graph $G_k$ has
	$$
	1 \cdot \frac{7}{24}(k-1) + 6 \cdot \frac{1}{24}(k-1) + 8 \cdot \frac{1}{24}(k-1) + 8 \cdot \frac{1}{6}(k-1) = \frac{53}{24}(k-1)
	$$
	vertices. It is not hard to check that for an induced subgraph $H$ of $G_k$ we have $maxrep(H) \le k-1$
	as follows. A subgraph with maximum degree $0$ can consist of all copies of $K_1$, two vertices from a copy of $H_1$, three vertices from a copy of $H_2$ and three vertices from a copy of $H_3$, yielding a total of $k-1$. A subgraph with maximum degree $1$ can consist of $4$ vertices from each copy of $H_1,H_2,H_3$, yielding a total of $k-1$. A subgraph with maximum degree $2$ can consist of $3,5,4$ vertices from each copy of $H_1,H_2,H_3$ respectively, yielding a total of $k-1$.
	A subgraph with maximum degree $3$ can consist of $2,2,5$ vertices from each copy of
	$H_1,H_2,H_3$ respectively, yielding a total of $k-1$.
\end{proof}
\begin{figure}[h!]
	\begin{center}
	\includegraphics[scale=0.5, trim=0 380 200 40]{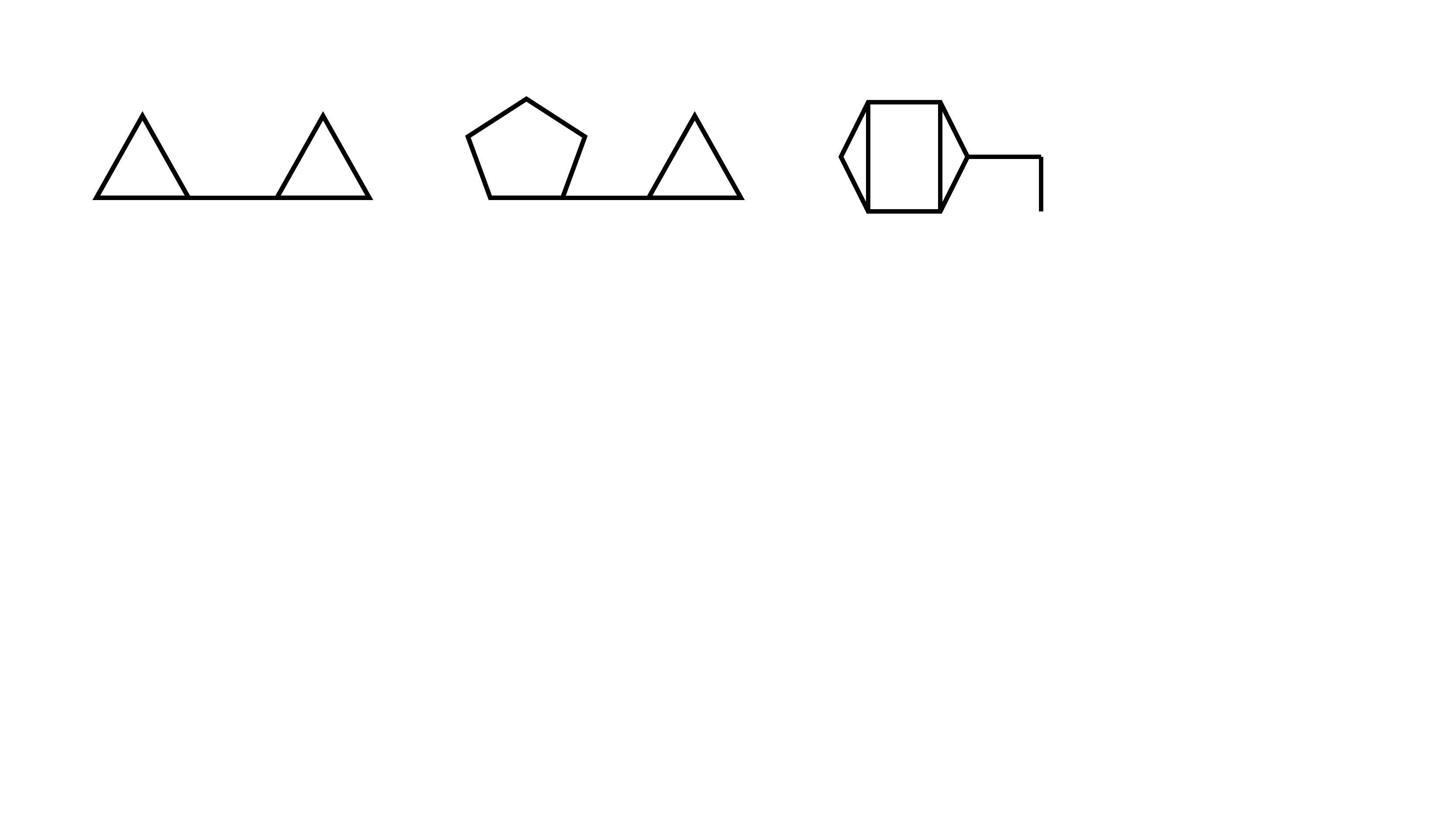}  
	\caption{The graphs $H_1,H_2,H_3$ of Lemma \ref{l:hk3lower} shown from left to right.}
	\label{f:hk3}
	\end{center}
\end{figure}
The reasoning behind the construction of Lemma \ref{l:hk3lower} is as follows.
A close to optimal construction for $h(k,3)$ is a disjoint union of bounded size components
since we have observed in the beginning of this section that a lower bound for $h(2k,3)$ is obtained
by taking two copies of an optimal construction for $h(k,3)$  and that the sequence divided by $k$
converges. So, given a positive integer constant $r$, we would like to optimize
over all graphs $G$ consisting of components of size at most $r$ each and maximum degree at most $3$. Denote by $D(r,3)$ the set of all connected graphs with at most $r$ vertices and maximum degree
at most $3$. If we can generate $D(r,3)$, we can use linear programming to decide upon the optimal
ratio of components of each possible element belonging to $D(r,3)$.
Construct
a linear programming instance as follows. For each $H \in D(r,3)$ define a nonnegative variable $x_H$.
Now maximize the sum $|H|x_H$ under the following constraints, one constraint for each $p=0,1,2,3$.
Let $c(H,p)$ denote the maximum number of vertices of maximum degree $p$ in an induced subgraph of $H$.
Then we require that the sum $c(H,p)x_H$ is at most $k-1$ (or one can normalize dividing
by $k-1$).
Now, if $r$ is too large, we do not have enough computing power to determine $D(r,3)$.
We have, however, determined $D(10,3)$, and have run the aforementioned linear program,
the result of which is the construction of Lemma \ref{l:hk3lower} which yielded, after normalization
$x_{K_1} = 7/24$, $x_{H_1}=1/24$, $x_{H_2}=1/24$  $x_{H_3}=1/6$.
The remaining variable are zero. In particular, this means that in any improved construction, if exists, one would
need to use components of size at least $11$.

Observe that the construction of Lemma \ref{l:hk3lower} fails as a lower bound construction for $g(k,3)$ since there is an induced subgraph (of maximum degree $3$) where the degree $2$ appears more than $k-1$ times.
\begin{lemma}\label{l:gk3lower}
	For infinitely many $k$ it holds that $g(k,3) \ge (13/6)(k-1)$.
\end{lemma}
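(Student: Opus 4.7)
The plan is to adapt the linear-programming construction strategy used for Lemma \ref{l:hk3lower}, tightening the constraint matrix so that it controls the multiplicity of \emph{every} degree value, not only the maximum one. As before, the target graph $G_k$ will be a vertex-disjoint union of small connected components of maximum degree at most $3$, and since an induced subgraph of $G_k$ decomposes into induced subgraphs of the individual components, controlling each component independently suffices.

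Concretely, let $D(r,3)$ denote the set of connected graphs on at most $r$ vertices of maximum degree at most $3$, and for $H \in D(r,3)$ and $p \in \{0,1,2,3\}$ let
$$c'(H,p) \;=\; \max_{H' \subseteq H \text{ induced}} \#\{v \in V(H') : \deg_{H'}(v) = p\}.$$
Introduce a nonnegative variable $x_H$ for each $H \in D(r,3)$ and consider the linear program
$$\max \sum_{H} |H|\, x_H \quad \text{subject to} \quad \sum_{H} c'(H,p)\, x_H \le 1, \; p = 0,1,2,3.$$
Any rational feasible point with objective value $L$, after multiplying through by $(k-1)$ times the common denominator, yields a graph $G_k$ on $L(k-1)$ vertices with maximum degree at most $3$ in which every induced subgraph has each of the four possible degree values occurring at most $k-1$ times, so $rep(H') < k$ for every induced $H' \subseteq G_k$. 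This proves $g(k,3) \ge L(k-1)$ along an arithmetic progression of values of $k$. Running the LP over $D(10,3)$, exactly as for Lemma \ref{l:hk3lower}, should return a rational optimum with $L = 13/6$, supported on a small handful of components with explicit rational weights. The proof then reduces to writing down those components and weights and verifying the four constraints by computing each $c'(H,p)$ via a direct enumeration of the induced subgraphs of each $H$ in the support.

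The main difficulty is the combinatorial bookkeeping of the $c'(H,p)$ values. Unlike the parameter $c(H,p)$ used in Lemma \ref{l:hk3lower}, whose induced subgraphs are implicitly restricted to those of maximum degree exactly $p$, the adversary here is free to concentrate vertices at any intermediate degree value, so $c'$ lacks the convenient monotonicity of $c$. This is precisely why the construction of Lemma \ref{l:hk3lower} fails as a lower bound for $g$: some induced subgraph of the disjoint union of copies of $K_1, H_1, H_2, H_3$ has the degree $2$ appearing more than $k-1$ times, violating the $p=2$ constraint. The new LP must therefore be solved afresh, and one expects the optimal support to differ from $\{K_1,H_1,H_2,H_3\}$; identifying it and verifying the four $c'$ values on each component is the only substantive content of the proof.
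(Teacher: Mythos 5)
Your framework is exactly the one the paper uses (a disjoint union of small bounded-degree components, with one constraint per degree value $p$ bounding the total multiplicity of $p$ over all induced subgraphs of the components), and your observation that the constraints must now control every degree value rather than only the maximum degree is the correct reason why the $h(k,3)$ construction cannot be reused. However, the proposal is not a proof: you explicitly defer the entire substantive content --- ``writing down those components and weights and verifying the four constraints'' --- to an LP computation that you do not carry out, and you state only that it ``should return'' an optimum of $13/6$. A lower bound lemma of this kind is proved by exhibiting the witness graph, and no witness is exhibited. As it stands, nothing in the proposal certifies that the value $13/6$ is achievable at all.

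For the record, the construction you would need to produce is simpler than the one for Lemma \ref{l:hk3lower}: take $(k-1)/6$ disjoint copies of each of $K_1$, $K_2$, $P_4$, and $Q=\overline{C_6}$ (the triangular prism), for a total of $(1+2+4+6)(k-1)/6=(13/6)(k-1)$ vertices. The per-component values $c'(H,p)$ are, for $p=0,1,2,3$ respectively: $(1,0,0,0)$ for $K_1$, $(1,2,0,0)$ for $K_2$, $(2,2,2,0)$ for $P_4$, and $(2,2,4,6)$ for $Q$; each row sums to $6$, so each degree value can be repeated at most $k-1$ times in any induced subgraph. Verifying these sixteen numbers (e.g., that $\overline{C_6}$ has independence number $2$, induced matching number $1$, an induced $C_4$ but no larger induced $2$-regular subgraph, and is itself $3$-regular on $6$ vertices) is short, but it is the proof; until it is done the lemma remains unproved.
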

\begin{proof}
	Consider the following set of graphs $\{K_1, K_2, P_4, Q\}$ where $P_4$ is the
	path on four vertices and $Q$ is the complement of $C_6$.
	Observe that each of them has maximum degree
	at most $3$. Now, suppose $k-1$ is a multiple of $6$. Construct a graph $G_k$ as follows.
	Take $(k-1)/6$ disjoint copies of each of $K_1$, $K_2$, $P_4$, $Q$.
	The graph $G_k$ has
	$$
	1 \cdot \frac{1}{6}(k-1) + 2 \cdot \frac{1}{6}(k-1) + 4 \cdot \frac{1}{6}(k-1) + 6 \cdot \frac{1}{6}(k-1) = \frac{13}{6}(k-1)
	$$
	vertices. It is not hard to check that for an induced subgraph $H$ of $G_k$ we have
	$rep(H) \le k-1$
	as follows. The degree $0$ can be repeated at most $1,1,2,2$ times in an induced subgraph of
	$K_1$, $K_2$, $P_4$, $Q$ respectively.
	The degree $1$ can be repeated at most $0,2,2,2$ times in an induced subgraph of
	$K_1$, $K_2$, $P_4$, $Q$ respectively.
	The degree $2$ can be repeated at most $0,0,2,4$ times in an induced subgraph of
	$K_1$, $K_2$, $P_4$, $Q$ respectively.
	The degree $3$ can be repeated at most $0,0,0,6$ times in an induced subgraph of
	$K_1$, $K_2$, $P_4$, $Q$ respectively.
	In all four cases, the yielded total is $k-1$.
\end{proof}

We now turn to the upper bounds. Our next lemma implies, in particular,
$\lim_{k \rightarrow \infty} \frac{h(k,3)}{k} \le \frac{5}{2}$.
\begin{lemma}\label{l:upper-1}
$h(k,d) \le (k-1)(d/2+1)$.
\end{lemma}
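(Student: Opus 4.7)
The plan is to prove the lemma by induction on $d$, with base case $d=0$ being immediate (every vertex is isolated of maximum degree $0$, so $n > k-1$ yields at least $k$ such vertices). For the inductive step, let $G$ have $\Delta(G)\le d$ and $n > (k-1)(d/2+1)$. If $\Delta(G) \le d-1$, the inductive hypothesis at parameter $d-1$ applies directly, since $(k-1)(d/2+1) > (k-1)((d-1)/2+1)$. So assume $\Delta(G)=d$ and let $M$ denote the set of maximum-degree vertices of $G$. If $|M|\ge k$ we take $G$ itself; otherwise $|M|\le k-1$ and we pass to $G'=G-M$, which has maximum degree at most $d-1$.

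The key refinement is to aim for the inductive hypothesis at level $d-2$ rather than $d-1$: the bound on $n$ permits exactly this saving. Concretely, if $\Delta(G')\le d-2$, then $|V(G')|\ge n-(k-1) > (k-1)(d/2+1)-(k-1) = (k-1)d/2$, which equals $(k-1)((d-2)/2+1)$, and the inductive hypothesis at $d-2$ applied to $G'$ yields the desired induced subgraph.

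The delicate case is $\Delta(G')=d-1$. Letting $M'$ be the maximum-degree vertices of $G'$, I would observe that each $v\in M'$ must satisfy $\deg_G(v)=d-1$ and $N_G(v)\cap M=\emptyset$: otherwise $v$ would have had degree $d$ in $G$ and thus been placed in $M$. Hence $M$ and $M'$ are disjoint with no edges between them. If $|M'|\ge k$ we take $G'$; otherwise $|M'|\le k-1$, and we split on the size of $|M|+|M'|$. If $|M|+|M'|\le k-1$, then passing to $G-(M\cup M')$ gives a graph of maximum degree at most $d-2$ and more than $(k-1)d/2$ vertices, so the inductive hypothesis at $d-2$ again finishes this subcase.

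The main obstacle is the remaining subcase $|M|+|M'|\ge k$: here one must exhibit an induced subgraph of $G$ with at least $k$ maximum-degree vertices using only the structural information that $M$ and $M'$ are disjoint, non-adjacent vertex sets of degrees $d$ and $d-1$ respectively, with total size at least $k$. In the clean situation where both $G[M]$ and $G[M']$ are edgeless, the induced subgraph $G[M\cup M']$ is an independent set of size at least $k$, all of whose vertices have the maximum degree $0$, so we are done. The general situation would require a more delicate combinatorial argument, perhaps by including carefully chosen auxiliary low-degree vertices from $V(G)\setminus(M\cup M')$ to equalize the degrees across $M$ and $M'$, or by iterating the deletion process further and exploiting the fact that every induced subgraph has at most $k-1$ maximum-degree vertices to constrain the internal structure of $G[M]$ and $G[M']$.
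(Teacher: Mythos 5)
There is a genuine gap, and you have identified it yourself: the subcase $|M|+|M'|\ge k$ is left unresolved, and it is not a minor technicality. Knowing that $M$ and $M'$ are disjoint, mutually non-adjacent sets of vertices of $G$-degree $d$ and $d-1$ with $|M\cup M'|\ge k$ gives you no control over the degrees \emph{inside} the induced subgraph $G[M\cup M']$, which can be wildly non-uniform; and there is no evident way to "equalize" them by adjoining auxiliary vertices. Trying to manufacture the witness subgraph directly out of the high-degree classes is the wrong move here.

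The paper escapes precisely this trap by never attempting to use the peeled-off high-degree vertices as the witness. Instead it exploits an edge count. Since fewer than $k$ vertices have degree $d$, one gets $e(G)\le n(d-1)/2+(k-1)/2$. One then peels vertices one at a time: repeatedly delete a vertex of current degree $d$, then of current degree $d-1$, and so on down to a graph $H_1$ of maximum degree at most $1$; if $x_i(k-1)$ vertices are deleted while having degree $i$, then $e(H_1)\le e(G)-(k-1)\sum_i i\,x_i$, because each such deletion removes $i$ edges. The dichotomy is then: either some tail sum $\sum_{j\ge i}x_j$ is small, in which case the intermediate graph $H_{i-1}$ still has more than $(k-1)((i-1)/2+1)$ vertices and the induction hypothesis (at level $i-1$, not merely $d-1$ or $d-2$) applies to it; or all tail sums are large, in which case so many edges have been destroyed that $|V(H_1)|-e(H_1)\ge k$, yielding an independent set of size $k$ --- an induced subgraph in which all $k$ vertices attain the maximum degree, namely $0$. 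Your proposal contains the germ of the first horn of this dichotomy (invoking the hypothesis at a lower level after deleting few vertices) but is missing the second horn entirely, and it is the second horn --- converting "many high-degree vertices were deleted" into "few edges remain, hence a large independent set" --- that closes the argument.
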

\begin{proof}
We prove the lemma by induction on $d$ where the base case $d=0$ is trivial.
Suppose $G$ is a graph with $n > (k-1)(d/2+1)$ vertices and with maximum degree at most $d$.
We must show that there is an induced subgraph $H$ with $maxrep(H) \ge k$.
If $G$ has $k$ or more vertices of degree $d$, we are done as we can take $H=G$.
So, assume otherwise. This means that the sum of the degrees of the vertices of $G$ is at most
$(d-1)n+k-1$ so $G$ has at most $n(d-1)/2+(k-1)/2$ edges.

We perform the following process which has $d-1$ stages.
At the beginning of stage $t$, we have an induced subgraph of $G$ denoted by $H_{d+1-t}$ with
maximum degree at most $d$. For $t=1$ set $H_d=G$. 
Stage $t$ consists of repeatedly removing vertices of degree $d+1-t$ form $H_{d+1-t}$
until we remain with a subgraph with maximum degree at most $d-t$ denoted by $H_{d-t}$.
Let $x_{d+1-t}$ be such that the number of vertices removed in stage $t$ is $x_{d+1-t}(k-1)$
and observe that we may assume that $0 \le x_i \le 1$ for $i=2,\ldots,d$ as otherwise we are
trivially done. When the process ends after stage $d-1$ we remain with a graph $H_1$ of maximum degree at most $1$.

The number of vertices of $H_1$ is
$$
n-(k-1)\sum_{i=2}^d x_i\;.
$$
The number of edges of $H_1$ is
$$
|E(G)| - (k-1)\sum_{i=2}^d ix_i \le n(d-1)/2+(k-1)/2 - (k-1)\sum_{i=2}^d ix_i\;.
$$
Now suppose that for some $2 \le i \le d$ we have $\sum_{j=i}^d x_j < n/(k-1)-(i+1)/2$. In this case
we have that the number of vertices of $H_{i-1}$ is
$$
n- (k-1)\sum_{j=i}^d x_j > (k-1)\left( \frac{i-1}{2}+1\right)
$$
which implies by the induction hypothesis that $H_{i-1}$ has
an induced subgraph $H$ with $maxrep(H) \ge k$.
So, we may assume that for all $2 \le i \le d$ it holds that $\sum_{j=i}^d x_j \ge n/(k-1)-(i+1)/2$.
In particular, this means that
\begin{equation}\label{e:sum}
\sum_{i=2}^d (i-1)x_i \ge \frac{n(d-1)}{k-1}-(d+2)(d+1)/4+1.5.
\end{equation}
Returning now to $H_1$, it has an independent set of size at least its number of vertices minus its number of edges, namely of size at least
\begin{align}\label{e:1}
& \quad \left(n-(k-1)\sum_{i=2}^d x_i\right) - \left(n(d-1)/2+(k-1)/2 - (k-1)\sum_{i=2}^d ix_i \right) \\
& \ge \; n\left(1-\frac{d-1}{2}\right)-\frac{k-1}{2}+n(d-1)+1.5(k-1)-(k-1)\frac{(d+2)(d+1)}{4} \nonumber\\
& = \; n\left(1+\frac{d-1}{2}\right)+(k-1)\left(1-\frac{(d+2)(d+1)}{4} \right) \nonumber\\
& > \;(k-1)\left[\left(\frac{d}{2}+1\right)\left(1+\frac{d-1}{2}\right)+1 - \frac{(d+2)(d+1)}{4} \right] \nonumber\\
& = \; k-1\;. \nonumber
\end{align}
So, there is an independent set of size at least $k$ in $G$ which is, in particular 
an induced subgraph $H$ with $maxrep(H) \ge k$.
\end{proof}

We can slightly modify the proof of Lemma \ref{l:upper-1} to obtain an upper bound for $g(k,d)$.
This upper bound gives, in particular, that $\lim_{k \rightarrow \infty} \frac{g(k,3)}{k} \le \frac{12}{5}$.
\begin{lemma}\label{l:upper-2}
	$g(k,d) \le (k-1)(2d+6)/5$ for $d \ge 2$ and $g(k,1) \le 1.5(k-1)$.
\end{lemma}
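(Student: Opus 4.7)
The plan is to mimic the proof of Lemma \ref{l:upper-1} essentially verbatim, with two modifications that exploit the fact that any repeated degree (not only the maximum one) now suffices for $rep(H) \ge k$. First, I handle $d = 1$ directly: a graph of maximum degree $\le 1$ decomposes as $a$ isolated vertices plus $b/2$ matched pairs with $n = a+b$. If $\max(a, b) \ge k$ take $H = G$; otherwise splitting an appropriate number of matched pairs into pairs of isolated endpoints yields an induced subgraph of repetition $\max(a + b/2, b)$. Requiring $a, b, a+b/2$ all to be $\le k-1$ forces $n \le \tfrac{3}{2}(k-1)$.

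For $d \ge 2$ I induct on $d$ with $d = 1$ as the base. Suppose for contradiction that $G$ has $n > (k-1)(2d+6)/5$ vertices, maximum degree $\le d$, and no induced subgraph of repetition $\ge k$; then $n_j(G) \le k-1$ for every $j$. The \emph{first modification} is to apply this at both $j = d$ and $j = d-1$, yielding the improved edge bound
\[
|E(G)| \;\le\; \frac{3(k-1) + (d-2)n}{2},
\]
strictly stronger than Lemma \ref{l:upper-1}'s $n(d-1)/2+(k-1)/2$. Then run the same stage-by-stage peeling: with $r_j$ the number of vertices peeled at current degree $j$ and $s_i := \sum_{j \ge i} r_j$, the same pigeonhole arguments force $r_j \le k-1$ and $s_i \ge n - g(k, i-1)$ for every $i \ge 2$ (using $g(k, 1) \le \tfrac{3}{2}(k-1)$ at $i = 2$ and the inductive hypothesis for $i \ge 3$).

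The \emph{second modification} is at the final stage. Since $H_1$ has maximum degree $\le 1$, splitting every matched pair of $H_1$ yields an induced subgraph of repetition $n_1 - e_1$, while keeping $H_1$ intact already gives $2 e_1$ vertices of degree $1$. In the contradiction scenario both must be $\le k-1$; using the first, $e_1 \ge n_1 - (k-1)$, together with $e_1 = |E(G)| - \sum j\, r_j$, the new edge bound, and the telescoping identity $\sum_{j=2}^d j\, r_j = s_2 + \sum_{j=2}^d s_j$, I derive
\[
\sum_{j=2}^{d} s_j \;\le\; \tfrac{5}{2}(k-1) + \tfrac{d-4}{2}\, n.
\]
On the other hand, summing the inductive lower bounds $s_j \ge n - g(k, j-1)$ for $j = 2, \ldots, d$ (with $g(k, 1) \le \tfrac{3}{2}(k-1)$ and $g(k, j-1) \le (k-1)(2j+4)/5$ for $j \ge 3$) produces $\sum_{j=2}^d s_j \ge (d-1)n - (k-1)(d^2+5d-6.5)/5$. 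Setting the two bounds against each other and using the factorization $d^2+5d+6 = (d+2)(d+3)$ to cancel a factor of $(d+2)$ collapses to $n \le (k-1)(2d+6)/5$, the desired contradiction.

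The main delicate point will be that each modification alone leaves an $\Omega(1)$ gap in $n/(k-1)$: only when the improved edge bound and the $H_1$-inequality $n_1 - e_1 \le k-1$ are combined does the algebra close, through the factorization $d^2+5d+6 = (d+2)(d+3)$, to yield exactly the target coefficient $(2d+6)/5$.
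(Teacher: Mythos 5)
Your proof is correct and follows essentially the same route as the paper's: the same peeling process as in Lemma \ref{l:upper-1}, the same improved edge bound (the paper's bound for general $r$ specialized to $r=2$ is exactly your $\frac{3(k-1)+(d-2)n}{2}$), the same inductive thresholds $s_i \ge n - g(k,i-1)$, and the same final count of $n_1-e_1$ isolated vertices obtainable from $H_1$, with your algebra closing to $n \le (k-1)(2d+6)/5$ just as the paper's does. The only substantive difference is that you work out the $d=1$ base case explicitly (correctly), which the paper dismisses as an easy exercise.
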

\begin{proof}
	We prove the lemma by induction on $d$ where the base case $d=1$ is an easy exercise.
	Assume now that $d \ge 2$ and suppose $G$ is a graph with
	$n > (k-1)(2d+6)/5$ vertices and with maximum degree at most $d$.
	We must show that there is an induced subgraph $H$ with $rep(H) \ge k$.
	If $G$ has $k$ or more vertices of the same degree we are done as we can take $H=G$.
	So, assume otherwise. This means that there are at most $k-1$ vertices of any given degree.
	Let $r \ge 2$ be any integer such that $r(k-1) \le n$ then the 
	sum of the degrees of the vertices of $G$ is at most
	$$
	d n - (k-1)-2(k-2)-\cdots-(r-1)(k-1)-r(n-r(k-1)) = (d-r)n -(k-1)\binom{r}{2}+r^2(k-1)\;.
	$$
	So the number of edges of $G$ is at most
	$$
	\frac{(d-r)n}{2}+(k-1)\left(\frac{r^2}{2}-\frac{r(r-1)}{4}\right)\;.
	$$
	We perform the exact same process  with $d-1$ stages as in Lemma \ref{l:upper-1}
	and using the same notations of $H_i$ and $x_i$ as in that lemma.
	When the process ends after stage $d-1$ we remain with a graph $H_1$ of maximum degree at most $1$.
	The number of vertices of $H_1$ is
	$n-(k-1)\sum_{i=2}^d x_i$ and the number of edges of $H_1$ is
	$$
	|E(G)| - (k-1)\sum_{i=2}^d ix_i \le  \frac{(d-r)n}{2}+(k-1)\left(\frac{r^2}{2}-\frac{r(r-1)}{4}\right) - (k-1)\sum_{i=2}^d ix_i\;.
	$$
	Now suppose that for some $3 \le i \le d$ we have $\sum_{j=i}^d x_j < n/(k-1)-(2i+4)/5$. In this case
	we have that the number of vertices of $H_{i-1}$ is
	$$
	n- (k-1)\sum_{j=i}^d x_j > (k-1)\left( \frac{2(i-1)+6}{5}\right)
	$$
	which implies by the induction hypothesis that $H_{i-1}$ has
	an induced subgraph $H$ with $rep(H) \ge k$.
	So, we may assume that for all $3 \le i \le d$ it holds that $\sum_{j=i}^d x_j \ge n/(k-1)-(2i+4)/5$.
	In the case $i=2$, if we have $\sum_{j=2}^d x_j < n/(k-1)-1.5$ then 
	the number of vertices of $H_1$ is
	$$
	n-(k-1) \sum_{j=2}^d x_j > 1.5(k-1)
	$$
	which implies that $H_1$ has an induced subgraph $H$ with $rep(H) \ge k$.
	So, we may assume that $\sum_{j=2}^d x_j \ge n/(k-1)-1.5$.
	In particular, this means that
	$$
	\sum_{i=2}^d (i-1)x_i \ge \frac{n(d-1)}{k-1}-\frac{4}{5}(d-1)-\frac{d(d+1)}{5}+0.5.
	$$
	Returning now to $H_1$, it has an independent set of size at least its number of vertices minus its number of edges, namely of size at least
	\begin{align*}
	& \quad \left(n-(k-1)\sum_{i=2}^d x_i\right) - \left( \frac{(d-r)n}{2}+(k-1)\left(\frac{r^2}{2}-\frac{r(r-1)}{4}\right) - (k-1)\sum_{i=2}^d ix_i \right)\\
	& \ge \; n\left(\frac{d+r}{2}\right)+(k-1)\left(\frac{r(r-1)}{4}-\frac{r^2}{2}+\frac{1}{2}-\frac{4}{5}(d-1)-\frac{d(d+1)}{5}\right)\\
	& > \; (k-1) \left[\left(\frac{2d+6}{5}\right) \left(\frac{d+r}{2}\right)+\frac{r(r-1)}{4}-\frac{r^2}{2}+\frac{1}{2}-\frac{4}{5}(d-1)-\frac{d(d+1)}{5}\right]\;.
	\end{align*}
	Now, the last inequality is at least $k-1$ if $r \ge 2$ and equals $k-1$ if $r=2$.
	So, there is an independent set of size at least $k$ in $G$ which is, in particular 
	an induced subgraph $H$ with $rep(H) \ge k$.
\end{proof}

\section{Concluding remarks and open problems}

Recall the important notion of quasi-random graphs defined by Chung, Graham, and Wilson \cite{CGW-1989}.
We say that a graph $G$ is quasi-random if for any subset of vertices $U \subseteq V(G)$ we have $e(U)=\frac{1}{2}\binom{|U|}{2} \pm o(n^2)$. In other words, it resembles the behavior of the random
graph $G(n,\frac{1}{2})$ with respect to edge distributions. It is well-known that quasi-random graphs possess many (though not all) properties of a typical random graph. Observe that in our proof of Theorem
\ref{t:1} we have used $\epsilon =1-\sqrt{2}/4$ but the proof works equally well with
any fixed $\epsilon > 0$ (this only affects the value of the constant $c$) or even when
$\epsilon$ is a function of $n$ tending slowly to zero (in which case we will still have
$g(k) \ge ck^{3/2}/\log k$). But then observe that the distribution $G(n,\overline{p})$ in the proof is a quasi-random graph with very high probability, since each pair is an edge with probability $\frac{1}{2} \pm o_n(1)$.
Consequently, this shows that there are quasi-random graphs with $n$ vertices for which every induced subgraph has repetition at most $n^{2/3} \log n$.
This should be compared to the result proved by Krivelevich, Sudakov, and Wormald \cite{KSW-2011} showing that with high probability a largest induced {\em regular} subgraph of $G(n,\frac{1}{2})$ has
about $n^{2/3}$ vertices. So, a random graph usually has a regular subgraph with about $n^{2/3}$ vertices
whereas already quasi-random graphs may not have even a repetition of about $n^{2/3}$ in an induced subgraph. Noticing this, it seems interesting to determine the typical maximum repetition of an induced subgraph of a random graph.

\vspace{5pt}
The functions $f(k,d)$, $g(k,d)$, $h(k,d)$ have the following sub-additive property.
\begin{lemma}\label{l:lovasz}
	Let $z \in \{f,g,h\}$, then $z(k,d_1)+z(k,d_2) \ge z(k,d_1+d_2+1)$.
\end{lemma}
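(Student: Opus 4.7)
The plan is to reduce the bound for degree $d_1+d_2+1$ to the bounds for degrees $d_1$ and $d_2$ by splitting the host graph into two parts of controlled maximum degree. Concretely, let $G$ be any graph with $\Delta(G) \le d_1+d_2+1$ and $|V(G)| > z(k,d_1)+z(k,d_2)$. I aim to produce a partition $V(G) = A \cup B$ with $\Delta(G[A]) \le d_1$ and $\Delta(G[B]) \le d_2$. Once I have this, pigeonhole forces $|A| > z(k,d_1)$ or $|B| > z(k,d_2)$, and the definition of $z(k,d_i)$ then hands me an induced subgraph of $G[A]$ or $G[B]$ with the desired property (regular of order $\ge k$ for $z=f$, repetition $\ge k$ for $z=g$, or $\ge k$ maximum-degree vertices for $z=h$). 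Since an induced subgraph of $G[A]$ or $G[B]$ is also an induced subgraph of $G$, this shows $z(k,d_1+d_2+1) \le z(k,d_1)+z(k,d_2)$.

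To produce the partition, I would invoke the classical Lovász local-search argument. Among all partitions $(A,B)$ of $V(G)$, pick one minimizing the weighted potential
\[
\Phi(A,B) \;=\; (d_2+1)\, e(G[A]) + (d_1+1)\, e(G[B]).
\]
Suppose for contradiction that some $v \in A$ satisfies $\deg_A(v) \ge d_1+1$. Because $\deg_G(v) \le d_1+d_2+1$, this forces $\deg_B(v) \le d_2$. Moving $v$ from $A$ to $B$ changes $\Phi$ by
\[
-(d_2+1)\deg_A(v) + (d_1+1)\deg_B(v) \;\le\; -(d_2+1)(d_1+1) + (d_1+1)d_2 \;=\; -(d_1+1) \;<\; 0,
\]
contradicting minimality. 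A symmetric computation rules out $v \in B$ with $\deg_B(v) \ge d_2+1$, so the minimizer has the required degree bounds. The weights $(d_2+1,\,d_1+1)$ are strictly positive even if $d_1=0$ or $d_2=0$, so no edge cases arise.

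There is no real obstacle: the argument is uniform in $z \in \{f,g,h\}$ because in all three cases the ``$k$-repetition'' property is preserved when one passes from an induced subgraph of $G[A]$ (or $G[B]$) to viewing it as an induced subgraph of $G$. The only delicate point is choosing the right weights in $\Phi$ so that the local move strictly decreases the potential under the relevant degree violation, and the choice $(d_2+1,\,d_1+1)$ makes this calculation a one-liner.
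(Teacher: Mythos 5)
Your proof is correct and follows the same route as the paper: partition the vertex set into parts inducing maximum degree at most $d_1$ and $d_2$ respectively, then apply the pigeonhole principle and the definition of $z(k,d_i)$. The only difference is that the paper simply cites Lov\'asz's 1966 partition theorem for the existence of the partition, whereas you reprove it via the standard potential-minimization argument (which is correct as written).
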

\begin{proof}
	By a result of Lovász \cite{lovasz-1966}, the vertices of a graph $G$ with maximum degree $d_1+d_2+1$ can be partitioned into two parts $A_1$ and $A_2$ where the maximum degree of $G[A_i]$ is at most $d_i$
	for $i=1,2$. Now, if $G$ has more than $h(k,d_1)+h(k,d_2)$ vertices, there is some $i$ for which
	$G[A_i]$ has more than $h(k,d_i)$ vertices, so by definition $G[A_i]$ has an induced subgraph $H$ with
	$maxrep(H) \ge k$. The same argument holds for $g$ and $f$.
\end{proof}
While Lemma \ref{l:lovasz} is not strong enough to improve upon Lemmas \ref{l:upper-1} and \ref{l:upper-2},
it is useful for obtaining the following upper bound:
\begin{proposition}\label{p:1}
	$$ \lim_{k \rightarrow \infty} \frac{f(k,d)}{k} \le \frac{11d}{15}\;.$$
\end{proposition}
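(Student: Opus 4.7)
My plan is to iterate Lemma~\ref{l:lovasz} in order to reduce the problem to bounding $f(k, d_0)$ for a well-chosen small $d_0$. Lovász's partition theorem, applied repeatedly, splits any graph with maximum degree at most $d$ into $r$ vertex classes whose induced subgraphs have maximum degrees $d_1, \ldots, d_r$ with $\sum_i (d_i + 1) \ge d+1$. By the pigeonhole principle, if the total number of vertices exceeds $\sum_{i=1}^{r} f(k, d_i)$, some class has more than $f(k, d_i)$ vertices, which guarantees an induced regular subgraph with at least $k$ vertices. This gives the subadditive inequality $f(k, d) \le \sum_i f(k, d_i)$ for every valid choice of $(d_i)$.

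To reach a bound of shape $c \cdot d$ with $c$ close to $11/15$, I would aim to establish an auxiliary estimate of the form $f(k, d_0) \le c_0 (k-1) + o(k)$ with $c_0/(d_0+1) \le 11/15$. The natural candidate is $d_0 = 2$, for which I would try to prove $f(k, 2) \le 2(k-1)$; this gives ratio $2/3 \le 11/15$. A graph with maximum degree at most $2$ is a disjoint union of cycles, paths, and isolated vertices, and when $n \ge 2k-1$ a case analysis should show that one of the following always produces an induced regular subgraph with at least $k$ vertices: a cycle of length $\ge k$, a union of shorter cycles summing to at least $k$, an independent set pooled from the contributions $\lceil m/2 \rceil$ per path $P_m$, $\lfloor c/2 \rfloor$ per cycle $C_c$, and the isolated vertices, or an induced matching assembled across components (about $\lfloor (m+1)/3 \rfloor$ edges per $P_m$ and $\lfloor c/3 \rfloor$ per $C_c$).

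Granting $f(k, 2) \le 2(k-1)$, applying Lemma~\ref{l:lovasz} with $r = \lceil (d+1)/3 \rceil$ parts each of max degree at most $2$ yields $f(k, d) \le 2 \lceil (d+1)/3 \rceil (k-1)$, and dividing by $k$ as $k \to \infty$ gives $\lim_{k\to\infty} f(k, d)/k \le 2(d+1)/3$. Since $2(d+1)/3 \le 11 d /15$ as soon as $d \ge 10$, the proposition follows for all sufficiently large $d$; the residual small values of $d$ would be handled directly using the known base cases $f(k, 0) = k-1$ and $f(k, 1) = \lfloor 1.5(k-1) \rfloor$ together with ad-hoc Lovász partitions tailored to each specific $d$. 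The main obstacle I expect is the structural case analysis required for $f(k, 2) \le 2(k-1)$: one must simultaneously track independent sets, induced matchings, and $2$-regular subgraphs coming from the three families of components, without missing borderline configurations where the obvious indexed contributions fall just short of $k$.
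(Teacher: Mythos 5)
Your overall architecture is the same as the paper's: establish an upper bound on $f(k,2)$ for graphs of maximum degree $2$, then amplify via the Lov\'asz partition lemma (Lemma~\ref{l:lovasz}) to general $d$. Your iteration $f(k,3r-1)\le r\,f(k,2)$ is in fact a slightly cleaner use of the lemma than the paper's, which only records the values $d=3\cdot 2^r-1$.

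The genuine gap is your base case: the auxiliary estimate $f(k,2)\le 2(k-1)$ is \emph{false}, so no case analysis can establish it. Take $(k-1)/5$ disjoint copies of $C_5$ together with $3(k-1)/10$ disjoint copies of $P_4$ (with $k-1$ a multiple of $10$). This graph has $11(k-1)/5>2(k-1)$ vertices, yet its largest induced $0$-regular, $1$-regular, and $2$-regular subgraphs each have exactly $k-1$ vertices: each $C_5$ contributes at most $2$ independent vertices and at most one induced matching edge, each $P_4$ contributes at most $2$ independent vertices and one induced matching edge, and the only source of $2$-regular vertices is the cycles. Plugging these counts into your own per-component contributions ($\lfloor c/2\rfloor$, $\lfloor c/3\rfloor$ edges, etc.) shows all three pooled totals land exactly at $k-1$ while $n=2.2(k-1)$, so this is precisely the ``borderline configuration'' you worried about, and it is unavoidable. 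The true value is $f(k,2)=11(k-1)/5$; the paper proves the upper bound by showing that if $t$ is the number of vertices on cycles, then either $t\ge k$ or the independent-set size plus the induced-matching size is at least $n-t/5$ (each cycle $C_r$ contributes at least $4r/5$ to that sum, worst case $C_5$, and path components contribute everything), whence one of the two is at least $n/2-t/10>k-1$. If you replace your false base case with $f(k,2)\le 11(k-1)/5$, your iteration goes through and recovers the stated bound for $d$ of the form $3r-1$, so the defect is localized --- but as written the proof does not stand.
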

\begin{proof}
	We prove that $f(k,2) \le 11(k-1)/5$.
	Let $G$ be a graph with $n > 11(k-1)/5$ vertices and of maximum degree at most $2$. Let $t$
	denote the number of vertices on all the cycles of $G$. If $t \ge k$ we are done.
	Otherwise, let $G'$ denote the induced subgraph on the path components which consists of $n-t$ vertices. Let $t_0$ denote the size of an independent set of $G'$ and let $t_1$ denote the
	number of vertices in a maximum induced matching of $G'$. It is straightforward to verify that
	$t_0+t_1 \ge n-t$. Now, in each cycle of length $r$, the number of independent vertices plus the number of vertices in an induced matching is at least $4r/5$ where the worst case occurs for $C_5$.
	So, if $s_0$ denotes the size of an independent set of $G$ and $s_1$ denotes the
	number of vertices in a maximum induced matching of $G$ we have that $s_0+s_1 \ge n-t+4t/5$.
	So there is an induced regular subgraph of $G$ of order at least $n/2-t/10$.
	But
	$$
	\frac{n}{2}-\frac{t}{10} > \frac{11(k-1)}{10} - \frac{k-1}{10}=k-1\;.
	$$
	It is worth noting that the bound $11(k-1)/5$ is tight as for any $k$ such that $k-1$ is a multiple of $10$ we can take $(k-1)/5$ copies of $C_5$ and $3(k-1)/10$ copies of $P_4$ and obtain a graph
	with $11(k-1)/5$ vertices and no induced regular subgraph on $k$ vertices.
	Now, using Lemma \ref{l:lovasz} we obtain that $f(k,3 \cdot 2^r-1) \le (11/5)(k-1)2^r$
	and the upper bound on the limit follows.
\end{proof}

Finally, it would be interesting to obtain a sub-linear upper bound for the limit of $h(k,d)/k$ or improve
the polynomial lower bound for $g(k)$ which would imply, by Proposition \ref{p:0}, an improved lower bound for the limit of $g(k,d)/k$.

\bibliographystyle{plain}

\bibliography{references}

\end{document}